\documentclass[reqno,a4paper]{amsart}

\usepackage{microtype}
\usepackage{amsmath,amssymb,amsthm,mathtools,xcolor,graphicx}

\numberwithin{equation}{section}

\newtheorem{theorem}{Theorem}[section]
\newtheorem{proposition}[theorem]{Proposition}
\newtheorem{lemma}[theorem]{Lemma}

\theoremstyle{remark}
\newtheorem{remark}[theorem]{Remark}

\newcommand{\R}{\mathbb{R}}

\newcommand{\dd}{\mathop{}\!{d}}
\newcommand{\ii}{i}
\newcommand{\ee}{e}
\newcommand{\A}{\mathbf{A}}

\newcommand{\spec}{\sigma}
\newcommand{\specess}{\sigma_{\mathrm{ess}}}

\DeclareMathOperator{\curl}{curl}
\DeclarePairedDelimiter{\abs}{\lvert}{\rvert}
\DeclarePairedDelimiter{\norm}{\lVert}{\rVert}
\DeclarePairedDelimiter{\set}{\lbrace}{\rbrace}

\title
  [Magnetic bound states in sectors]
  {Bound states for the\\magnetic Neumann Laplacian\\ in planar sectors}

\author[A. Kachmar]{Ayman Kachmar}
\address
  [A. Kachmar]
  {Department of Mathematics {\normalfont and} PDE Research Unit–Center for
  Advanced Mathematical Sciences (CAMS)\newline American University of Beirut,
  P.O. Box 11-0236 Riad El-Solh / Beirut 1107 2020 Lebanon.}
\email{ak292@aub.edu.lb}

\author[M. Sundqvist]{Mikael Sundqvist}
\address
  [M. Sundqvist]
  {Department of Mathematics, Lund University, Sweden}
\email{mikael.persson\_sundqvist@math.lth.se}

\subjclass[2020]{35P15, 35J10, 81Q10}
\keywords{Magnetic Neumann Laplacian, planar sector, discrete spectrum,
de Gennes constant, bound state}

\begin{document}

\begin{abstract}
We study the magnetic Neumann Laplacian in an infinite planar sector of
opening $\alpha\in(0,\pi)$ under a constant magnetic field. Building on
earlier work by Bonnaillie-No\"el and collaborators and by Exner,
Lotoreichik, and P\'erez-Obiol, we prove that the bottom of the spectrum
lies strictly below the half-plane threshold for every convex sector.
Consequently, $H_\alpha$ has a discrete ground-state eigenvalue for every
$0<\alpha<\pi$. This resolves the bound-state problem for convex sectors,
a model problem arising in the analysis of magnetic localization near
corners and of the third critical field in type-II superconductivity.
\end{abstract}

\maketitle

\section{Introduction and the main result}

For $0 < \alpha < \pi$, set
\[
  \Omega_\alpha
  =\set{(x,y)\in\R^2 \mid \abs {y} < x\tan (\alpha/2)}.
\]
Throughout the paper the magnetic field has strength one. Let
\[
  \A _ 0(x,y)=\frac12(-y,x),
  \qquad \curl\A _ 0=1,
\]
and let $H_\alpha$ be the self-adjoint operator associated with
\[
  q_\alpha[u]
  =
  \int_{\Omega_\alpha}
  \abs{(-\ii\nabla + \A _ 0)u}^2\,\dd x\,\dd y,
  \qquad
  u\in H^1_{\A _ 0}(\Omega_\alpha).
\]
Here
\[
  H^1_{\A_0}(\Omega_\alpha)
  =
  \set{
    u\in L^2(\Omega_\alpha)
    \mid
    (-\ii\nabla+\A_0)u\in L^2(\Omega_\alpha;\mathbb C^2)
  };
\]
the magnetic Neumann boundary condition is encoded by this form domain.

We write
\[
  \lambda(\alpha) = \inf\spec(H_{\alpha}).
\]
The essential-spectrum threshold is the half-plane threshold~\cite{MR2127997},
\[
  \inf\specess(H_{\alpha}) = \Theta_0,
  \quad 0 < \alpha < \pi,
\]
where $\Theta_0 \approx 0.59$ is a spectral constant defined in
Section~\ref{sec:de-gennes}.

\begin{theorem}\label{thm:main}
For every $0<\alpha<\pi$,
\[
  \lambda(\alpha) < \Theta_0.
\]
Consequently, $\lambda(\alpha)$ is a discrete eigenvalue of $H_{\alpha}$.
\end{theorem}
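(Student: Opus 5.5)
By the min-max principle, it suffices to exhibit, for each $\alpha\in(0,\pi)$, a test function $u\in H^1_{\A_0}(\Omega_\alpha)$ with
\[
  q_\alpha[u] < \Theta_0\,\|u\|^2 .
\]
Since $\inf\specess(H_\alpha)=\Theta_0$, this gives $\lambda(\alpha)<\Theta_0$, and $\lambda(\alpha)$ is then an isolated eigenvalue of finite multiplicity.

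The known results split the range of angles. For small openings, Bonnaillie-No\"el's asymptotics $\lambda(\alpha)\sim \alpha/\sqrt3$ as $\alpha\to0$, together with numerics and trial states, cover roughly $\alpha\lesssim 0.5\pi$. For $\alpha$ near $\pi$, Exner--Lotoreichik--P\'erez-Obiol use a perturbative argument. The task is a single construction that works uniformly on $(0,\pi)$. My plan has four steps.

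\textbf{Gauge and coordinates.} I would move to a gauge adapted to one edge. Rotate so that the lower edge is $\{y=0,\ x>0\}$ and the sector is $\{0<\theta<\alpha\}$, and use $\A=(-y,0)$, which is gauge equivalent to $\A_0$. In the half-plane $\{y>0\}$ the de Gennes ground state is $e^{\ii \xi_0 x}\varphi_0(y)$. Here $\varphi_0$ is the Neumann ground state of $-\partial_y^2+(y-\xi_0)^2$ on $\R_+$, with eigenvalue $\Theta_0$, $\varphi_0'(0)=0$, and Feynman--Hellmann gives $\int(y-\xi_0)\varphi_0^2=0$. I would also use the identity $\varphi_0(0)^2=\xi_0\,\Theta_0$ (Dauge--Helffer type), assumed available from Section~\ref{sec:de-gennes}.

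\textbf{Test function.} Take $u_R(x,y)=\chi_R(x)\,e^{\ii\xi_0 x}\varphi_0(y)$ restricted to the sector. Here $\chi_R(x)=\chi(x/R)$ is a cutoff supported on $x\in(0,2R)$ with $\chi=1$ near the corner side. Better, instead of a cutoff, I would choose $\chi$ equal to $1$ up to $x\approx 0$ and decaying on a scale $R\to\infty$ along the edge. This costs only $O(R^{-1})$ in energy, while the corner acts on an $O(1)$ portion. Writing the exact energy,
\[
  q[u]-\Theta_0\|u\|^2 = \int|\chi'|^2\varphi_0^2 \;+\; \text{boundary terms from the second edge },
\]
where the boundary terms come from integrating $|(-\ii\partial_y+\xi_0-y)u|^2$ by parts. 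The domain is $\{y<x\tan\alpha\}$ rather than the half-plane, so the Neumann integration by parts no longer closes. Since $\varphi_0$ is Neumann at $y=0$ only, this produces a boundary integral over the slanted edge $\{y=x\tan\alpha\}$. That integral involves $\varphi_0\varphi_0'$ and the phase, and it must be negative, with magnitude of order one, uniformly as $R\to\infty$.

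\textbf{Sign of the corner term.} This is the main obstacle. Reducing everything to one-dimensional integrals in $y$ should give
\[
  \int_0^\infty \Big(\varphi_0'(y)^2+\big((y-\xi_0)^2-\Theta_0\big)\varphi_0(y)^2\Big)\,\cot\alpha\,\dd y,
\]
that is, half-plane energy densities truncated along the slanted edge. The truncation removes the region $\{y>x\tan\alpha\}$, and the energy density there, weighted by the area $y\cot\alpha$, must be shown to have a favorable sign. I expect this quantity to be expressible, via the Virial/moment identities for $\varphi_0$ (e.g.\ $\int (y-\xi_0)\varphi_0^2=0$ and $\varphi_0(0)^2=\xi_0\Theta_0$), as $-c(\alpha)\varphi_0(0)^2$ plus terms vanishing in the limit. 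I would compute it exactly.

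\textbf{Fallbacks.} If the pure edge state fails for obtuse $\alpha$, where $\cot\alpha<0$, I would try two things first. One is to symmetrize: superpose the de Gennes states attached to both edges, each in its own gauge, with the gauge transformation $e^{\ii\phi}$ relating them. The cross term is then an overlap integral near the corner, and I would choose a relative phase that makes it negative. The other is to add a small variational perturbation $\epsilon v$ localized at the corner. The first-order term $2\epsilon\,\mathrm{Re}\,\langle (H-\Theta_0)u,v\rangle$ is a pure boundary pairing on the second edge, where $u$ fails the Neumann condition. Since that edge residual is nonzero, choosing $v$ to match it makes the first-order term negative and yields strict inequality for small $\epsilon$.
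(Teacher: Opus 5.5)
There is a genuine gap, and it lies exactly in the range that needed proof. Your edge-state ansatz $u=\chi(x)\ee^{\ii\xi_0x}\varphi_0(y)$ can only succeed for $\alpha\le\pi/2$. It works directly for $\alpha<\pi/2$, and at $\alpha=\pi/2$ after your first-order corner perturbation, but that range is already covered in the literature. Set $h=(\varphi_0')^2+((y-\xi_0)^2-\Theta_0)\varphi_0^2=(\varphi_0\varphi_0')'$. Then $\int_0^\infty h\,\dd y=0$ and $\int_0^\infty y\,h\,\dd y=\varphi_0(0)^2/2$; the weight $y$ is essential, and your displayed unweighted integral vanishes. So with $\chi=1$ near the corner the limiting defect is $-\frac{\cot\alpha}{2}\varphi_0(0)^2$, which is positive for $\pi/2<\alpha<\pi$. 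No choice of cutoff rescues this. For $\cot\alpha<0$ the truncation term equals $\abs{\cot\alpha}\int_0^\infty\chi(-y\abs{\cot\alpha})^2\,(-\varphi_0\varphi_0')(y)\,\dd y\ge0$, because $\varphi_0'<0$ on $(0,\infty)$. The cutoff cost $\iint(\chi')^2\varphi_0^2$ is nonnegative as well.

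Your fallbacks do not close the gap. The expansion $D_0+2\epsilon\,\mathrm{Re}\langle\cdot,\cdot\rangle+O(\epsilon^2)$ becomes negative for small $\epsilon$ only if $D_0\le0$. Against a positive defect of order one it proves nothing, and taking $\epsilon$ large brings back the uncontrolled quadratic term, which is the whole difficulty. The superposition $u_1+t\ee^{\ii\theta}u_2$ of the two edge states has negative defect for some $t,\theta$ if and only if the overlap term satisfies $\abs{X}^2>D_1D_2$, where $D_1=D_2>0$. You give no mechanism or estimate for that. Separately, $\varphi_0(0)^2=\xi_0\Theta_0$ is false: $\varphi_0(0)^2=3C_1\approx0.76$, whereas $\xi_0\Theta_0=\xi_0^3\approx0.45$. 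The identities you need are $M_3=\varphi_0(0)^2/6$ and $\int_0^\infty y\,h\,\dd y=\varphi_0(0)^2/2$.

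The missing idea is to use the edge state only on the half-sector between one edge and the bisector. There the truncating line makes the acute angle $\alpha/2$ with the edge, so the truncation term has the favorable sign: $-\frac1k\int_0^\infty t\,h\,\dd t=-\frac{3C_1}{2k}$ with $k=\tan(\alpha/2)$. The bisector is not a boundary, so the paper extends by $u^+(x,y)=\overline{u^-(x,-y)}$. In the symmetric gauge this preserves norm and energy, and it produces an element of $H^1_{\A_0}(\Omega_\alpha)$ provided the trace on $y=0$ is real.

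Reality of the trace is arranged by a real phase correction $\chi(s,t)$ (the paper's notation, unrelated to your cutoff) with $\chi(t/k,t)=G(t)/k$, where $G(t)=t^2/2-\xi_0t$. It enters the trial state $\ee^{-\epsilon s}\varphi_0(t)\ee^{\ii(\xi_0s+\chi(s,t))}$. Its current term costs $+C_1/(2k)$ via $M_3=C_1/2$, leaving the limiting defect $-C_1/k+E_k[\chi]$. The explicit choice $\chi=\frac1k\Psi(s-t/k,t)$ with $\Psi(z,t)=\ee^{-2z}[G(t)+z(t-\xi_0)]$ gives $E_k[\chi]=J(1/k)/k^2$. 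Here $J$ is a convex quadratic in $c=1/k$ with $J(0),J(1)<C_1$, by the $M_4$ identity and the numerical bounds. Hence $E_k[\chi]<C_1/k^2\le C_1/k$.

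This is the precise form of the symmetrization you gesture at. It glues two half-sector states across the bisector rather than superposing overlapping full edge states, and it checks quantitatively that the phase cost is smaller than the corner gain.
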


\begin{remark}
There is no loss of generality in fixing the magnetic field strength to be one.
If $H_{\alpha,b}$ denotes the corresponding operator for a constant field $b>0$
and
\[
  \lambda_b(\alpha)=\inf\spec(H_{\alpha,b}),
\]
then dilation invariance of the sector gives
\[
  H_{\alpha,b}\simeq bH_\alpha,
  \qquad
  \lambda_b(\alpha)=b\lambda(\alpha).
\]
In particular, the essential-spectrum threshold is $b\Theta_0$.
\end{remark}

\begin{remark}
The endpoint $\alpha=\pi$ cannot be included in Theorem~\ref{thm:main}. In that
case, $\Omega_\pi$ is a half-plane and
\[
  \inf\spec(H_\pi)=\Theta_0,
\]
but $\Theta_0$ is not an $L^2$-eigenvalue,~\cite{MR2662319}.
\end{remark}

The magnetic Neumann Laplacian on a sector is the local model for a corner in the
strong-field, or equivalently semiclassical, analysis of magnetic Schr\"odinger
operators. Its lowest energy determines whether magnetic states localize near the
corner rather than along a smooth portion of the boundary. This mechanism is
important in the Ginzburg--Landau theory of type-II superconductors: close to the
third critical field, the geometry of the boundary influences where
superconductivity persists, and corners can act as preferred nucleation
sites~\cite{MR2231969,MR2340675}, a scenario that is conditional on
$\lambda(\alpha)<\Theta_0$, as now proved in Theorem~\ref{thm:main} above.

The foundational analysis of the sector problem for general opening angles was
carried out by Bonnaillie~\cite{MR2127997}; for the special case $\alpha=\pi/2$,
see also \cite{MR1911825, MR1852538}. Numerical computations by
Bonnaillie-No\"el, Dauge, Martin, and Vial~\cite{MR2340008} subsequently gave
strong evidence that
\[
  \lambda(\alpha)<\Theta_0
  \qquad\text{for every }0<\alpha<\pi,
\]
and that $\lambda(\alpha)$ is an increasing function of $\alpha$. Exner,
Lotoreichik, and P\'erez-Obiol later established the existence of a bound state
for a broad range of opening angles by a variational argument and extended this
range further ($\alpha< 0.595\pi$) with computer assistance~\cite{MR3906610}.
More recently, Bonnaillie-No\"el, Fournais, Kachmar, and Raymond proved the
existence of bound states for almost-flat corners, corresponding to $\alpha$
sufficiently close to $\pi$ from below~\cite{MR4780715}.

\begin{figure}[htb]
  \includegraphics[page=2]{figures}
  \caption {Various results for the sector.
  A: for $\alpha = \pi/2$~\cite {MR1852538}.
  B: for $0 < \alpha < \pi/2$~\cite {MR2127997}.
  C: for $\alpha$ up to $0.511\pi$~\cite {MR2017694}.
  D: for $\alpha$ up to $0.595\pi$~\cite {MR3906610}.
  The marked dots are values we get when calculating the energy of our
  trial state with $\epsilon = 0.01$. The small square at $\alpha = \pi$
  recalls the result from~\cite{MR4780715}.}
  \label {fig:sectorresults}
\end{figure}

Theorem~\ref{thm:main} completes this picture for convex sectors. Theorem~1.3
of~\cite{MR3906610} already covers, in particular, the interval
$0<\alpha\leq\pi/2$. It therefore remains to prove the strict inequality for
\[
  \frac{\pi}{2}\leq\alpha<\pi,
\]
which is the purpose of the variational construction below. The proof relies on
the construction of a trial state. However, unlike~\cite{MR4321415,MR4780715},
the setting of Theorem~\ref{thm:main} does not allow a truncated phase factor in
the trial state; this forces a global construction and a more refined energy
estimate.

The proof of Theorem~\ref{thm:main} is given in the next section. We then
conclude by collecting several applications of this result.

\section {Proof of Theorem~\ref {thm:main}}

Throughout the proof we assume that $\pi/2\leq\alpha<\pi$. We construct a trial
function with Rayleigh quotient strictly below $\Theta_0$. We begin by recalling
the properties of the de Gennes operator needed in the construction.

\subsection {The de Gennes operator and moment identities}\label{sec:de-gennes}

For $\xi\in\R$, consider the Neumann harmonic oscillator
\[
  \mathfrak {h}(\xi)
  =-\frac {\dd^2}{\dd t^2}+(t-\xi)^2
  \quad\text{in }L^2(\R_+),
\]
with Neumann boundary condition at $t = 0$. Let $\mu(\xi)$ be its lowest
eigenvalue, let $\xi _ 0 > 0$ be its unique minimizer, and define
\[
  \Theta _ 0=\mu(\xi _ 0).
\]

Let $\varphi_0>0$ be the corresponding normalized eigenfunction, and set
\[
  C_1=\frac{\varphi_0(0)^2}{3}.
\]
We also recall that $\Theta_0=\xi_0^2$.

Define the moments
\[
  M_n=\int_0^\infty (t-\xi_0)^n\varphi_0(t)^2\,\dd t.
\]
The standard identities needed below are collected in the following
lemma; see~\cite{MR2662319}.

\begin{lemma}\label{lem:moments}
The following identities hold:
\begin{equation}
  M_1 = 0,\quad
  M_2 = \frac {\Theta _ 0}{2},\quad
  M_3 = \frac {C_1}{2},\quad
  M_4 = \frac {3}{8} (1 + \Theta _ 0 ^ 2 - 3 \xi _ 0 C _ 1 ).
  \label{eq:moment-1234}
\end{equation}
Moreover, with
\[
  h(t) = \varphi _ 0'(t)^2 + (t - \xi _ 0) ^ 2 \varphi _ 0(t) ^ 2 - \Theta _ 0 \varphi _ 0(t)^2,
\]
one has
\begin{equation}
  \int _ 0 ^ {+\infty} h(t)\dd t = 0,
  \quad
  \int _ 0 ^ {+\infty} t h(t)\dd t = \frac{\varphi _ 0(0)^2}{2} = \frac{3C_1}{2}.
  \label{eq:weighted-energy}
\end{equation}

\end{lemma}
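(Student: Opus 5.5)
The plan is to derive every identity from the eigenvalue equation by integrating against suitable polynomial weights. Set $s=t-\xi_0$, $\varphi=\varphi_0$ and $V(t)=s^2-\Theta_0$. Then
\[
  \varphi''=V\varphi \quad\text{on } \R_+,\qquad \varphi'(0)=0 .
\]
I will use the recalled fact $\Theta_0=\xi_0^2$, which gives $V(0)=\xi_0^2-\Theta_0=0$. Since $\varphi$ and $\varphi'$ decay like Gaussians at infinity, all boundary terms at $+\infty$ vanish, and only $t=0$ contributes.

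\textbf{Two virial-type identities.} For a polynomial $p$ (in $s$), multiply the equation by $2p\varphi'$. This gives $p(\varphi'^2)'=pV(\varphi^2)'$. Integrating by parts, and using $\varphi'(0)=0$ and $V(0)=0$, both boundary terms at $0$ vanish, so
\begin{equation*}
  \int_0^\infty p'\varphi'^2\,\dd t=\int_0^\infty (pV)'\varphi^2\,\dd t. \tag{A}
\end{equation*}
For a polynomial $q$, multiply instead by $q\varphi$ and integrate by parts. The boundary term $q\varphi\varphi'$ at $0$ vanishes, so
\begin{equation*}
  -\int_0^\infty q\varphi'^2\,\dd t-\int_0^\infty q'\varphi\varphi'\,\dd t=\int_0^\infty qV\varphi^2\,\dd t. \tag{B}
\end{equation*}
Here $\int q'\varphi\varphi'=\tfrac12\bigl([q'\varphi^2]_0^\infty-\int q''\varphi^2\bigr)$, which is where $\varphi(0)^2$ enters.

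\textbf{Solving the resulting linear system.} I take the following weights in turn.
\begin{itemize}
\item $p=1$ in (A): the left side is $0$ and the right side is $2M_1$, so $M_1=0$.
\item $p=s$ in (A) and $q=1$ in (B): these give $\int\varphi'^2=3M_2-\Theta_0$ and $\int\varphi'^2=\Theta_0-M_2$. Hence $M_2=\Theta_0/2$, and also $\int h=\int\varphi'^2+M_2-\Theta_0=0$.
\item $p=s^2$ in (A) and $q=s$ in (B): these give $\int s\varphi'^2=2M_3$ (using $M_1=0$) and $\int s\varphi'^2=\varphi(0)^2/2-M_3$. Hence $M_3=\varphi(0)^2/6=C_1/2$.
\item $p=s^3$ in (A): this gives $3\int s^2\varphi'^2=5M_4-3\Theta_0M_2$.
\item $q=s^2$ in (B): since $s(0)=-\xi_0$, we have $\int 2s\varphi\varphi'=\xi_0\varphi(0)^2-1$, so $\int s^2\varphi'^2=1-\xi_0\varphi(0)^2+\Theta_0M_2-M_4$.
\end{itemize}
Eliminating $\int s^2\varphi'^2$ between the last two gives
\[
  8M_4=3+3\Theta_0^2-9\xi_0C_1,
\]
which is the stated value of $M_4$.

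\textbf{The weighted energy.} Write $t=s+\xi_0$. Then
\[
  \int t h=\int s h+\xi_0\int h=\int s\varphi'^2+M_3-\Theta_0M_1=3M_3=\frac{\varphi(0)^2}{2}=\frac{3C_1}{2}.
\]

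The computation is routine. The only points needing care are the boundary terms at $t=0$: the vanishing of $V(0)$, which relies on $\Theta_0=\xi_0^2$, and the sign coming from $s(0)=-\xi_0$ in the $q=s^2$ step. These are the steps I would double-check.
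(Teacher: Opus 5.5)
Your proposal is correct, and all the computations check out, including the two boundary terms you flagged: $V(0)=0$ kills $[pV\varphi^2]$ at $t=0$ in (A), and $\int 2s\varphi\varphi'\,\dd t=\xi_0\varphi(0)^2-1$ in the $q=s^2$ step.

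It differs from the paper mainly in scope. The paper does not derive $M_1,\dots,M_4$; it quotes them from \cite{MR2662319}. It proves only the two $h$-identities, by testing the de Gennes equation against $\varphi_0$ and $t\varphi_0$ and integrating by parts. These are exactly your (B) with $q=1$ and $q=t$.

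Your pairing of the virial identity (A) with the energy identity (B) buys a self-contained proof of the moment identities. It also shows where $\Theta_0=\xi_0^2$ is used. Indeed, (A) with $p=1$, without assuming $V(0)=0$, reads $2M_1=(\Theta_0-\xi_0^2)\varphi_0(0)^2$. So $M_1=0$ and $\Theta_0=\xi_0^2$ are equivalent, and one of them must be supplied from outside. The paper recalls $\Theta_0=\xi_0^2$ before the lemma, so there is no circularity; classically, $M_1=0$ comes from the Feynman--Hellmann formula at the minimum of $\mu$.

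For $\int th$, your route through (A) with $p=s^2$ and the value of $M_3$ is unnecessary. Since $h=\varphi_0'^2+V\varphi_0^2$, identity (B) with $q=s$ already says $\int_0^\infty s\,h\,\dd t=\varphi_0(0)^2/2$. Adding $\xi_0\int h\,\dd t=0$ gives the claim in one line, and this is the paper's argument.
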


\begin{proof}
The moment identities are standard; see~\cite{MR2662319}. The first identity
in~\eqref{eq:weighted-energy} follows by integrating the eigenvalue equation.
Multiplying the de Gennes equation by $t\varphi _ 0$ and integrating by parts
yields the second identity.
\end{proof}

We recall the following rigorous bounds from~\cite{MR2912745}:
\[
  \frac{5901}{10000} < \Theta _ 0 < \frac{5902}{10000},
  \quad
  \frac{7681}{10000} < \xi _ 0 < \frac{7682}{10000},
  \quad
  \frac{253}{1000} < C _ 1 < \frac{255}{1000}.
\]
In fact, for us it will be sufficient to use the weaker bounds
\begin{equation}\label{eq:coarse-bounds}
  \Theta _ 0 < \frac {3}{5},
  \quad
  \xi _ 0 > \frac {3}{4},
  \quad
  C _ 1 > \frac {1}{4}.
\end{equation}

\subsection {Half-sector coordinates and phase matching}

We first reduce to one half of the sector. Set
\[
  \beta=\frac\alpha2,
  \quad k=\tan\beta.
\]
Then $k\geq1$.

\begin{figure}[htb]
  \includegraphics[page=1]{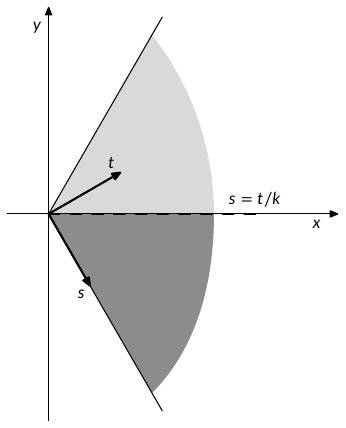}
  \caption {The sector, with the half we work in marked in darker gray. The
  coordinate systems are also shown, and the symmetry line $y = 0$ ($s = t/k$) is
  dashed.}
  \label {fig:coordinates}
\end{figure}

On the lower half of the sector introduce orthonormal coordinates
$(s,t)$ by
\begin{equation}\label{eq:coordinates}
  (x,y)
  =s(\cos\beta,-\sin\beta)
   +t(\sin\beta,\cos\beta).
\end{equation}
Here $s$ is the coordinate along the lower boundary ray and $t$ is the inward
normal coordinate, see Figure~\ref{fig:coordinates}. The lower half-sector
becomes
\[
  D _ k = \set{(s,t) \mid t > 0,\ s > t/k},
\]
and the line $s=t/k$ is the bisector $y = 0$. If $R_\beta$ denotes the rotation
in~\eqref{eq:coordinates}, then
\[
  R _ {\beta} ^ {T} \A_0(R_\beta(s,t))
  =
  \frac12(-t,s).
\]
Thus, in these coordinates the symmetric gauge is given by
\[
  \A_{\mathrm {sym}} = \frac {1}{2} (-t,s).
\]
We also use the Landau gauge
\[
  \A _ L = (-t,0),
  \quad
  \A _ {\mathrm {sym}}
  =\A _ L + \nabla\left(\frac{st}{2}\right).
\]
Thus a Landau-gauge function $v$ corresponds to the symmetric-gauge function
\begin{equation}\label{eq:gauge-transform}
  u = \ee ^ {-\ii st/2}v.
\end{equation}
Define
\[
  G(t)
  =
  \frac12\bigl((t-\xi _ 0)^2-\Theta _ 0\bigr)
  =
  \frac {t ^ 2}{2} - \xi _ 0 t,
\]
where the last equality follows from $\Theta _ 0=\xi _ 0 ^ 2$.  We shall choose a
real phase correction $\chi$ satisfying
\begin{equation}\label{eq:phase-boundary-condition}
  \chi\left(\frac tk,t\right)
  =
  \frac{G(t)}k.
\end{equation}
On the bisector, $s = t/k$, the total symmetric-gauge phase then vanishes (see
the trial state in~\eqref{eq:trial-v} below):
\begin{equation}\label{eq:real-trace}
  \xi _ 0 s+\chi(s,t)-\frac{st}{2}
  =\frac{\xi _ 0 t}{k}+\frac{t^2/2-\xi _ 0 t}{k}
   -\frac{t^2}{2k}
  =
  0.
\end{equation}
Consequently, any trial function with this phase has a real trace on
$y=0$. If $u^-$ denotes its restriction to the lower half-sector, define
its extension to the upper half-sector by
\begin{equation}\label{eq:reflection}
  u ^ {+} (x,y) = \overline {u ^ {-} (x,-y)}.
\end{equation}
The two traces agree by~\eqref{eq:real-trace}, hence the two pieces glue together
to define a function in the magnetic form domain. A direct calculation in the
symmetric gauge shows that reflection followed by complex conjugation preserves
both the $L ^ 2$ norm and the magnetic energy. The energy and norm of the
full-sector trial function are therefore twice those of its lower-half
restriction.

\subsection{The phase and a limiting corner defect}

Put $z = s - t/k \geq 0$ and define
\begin{equation}\label{eq:explicit-phase}
  \Psi(z,t) = \ee ^ {-2z}[G(t) + z(t - \xi _ 0)],
  \quad
  \chi(s,t) = \frac {1}{k} \Psi(s - t/k,t).
\end{equation}
At $z=0$, the phase satisfies $\chi(t/k,t) = G(t)/k$, as required
by~\eqref{eq:phase-boundary-condition}. Moreover, for each fixed $t$, $\chi(s,t)$
decays exponentially as $s\to+\infty$. For $\epsilon>0$, define in the Landau
gauge
\begin{equation}\label{eq:trial-v}
  v _ {\epsilon}(s,t)
  =
  \ee ^ {-\epsilon s}\varphi _ 0 (t) \ee^{\ii(\xi _ 0 s+\chi(s,t))},
  \quad (s,t)\in D_k.
\end{equation}
Applying the gauge transformation~\eqref{eq:gauge-transform} and then
the reflection rule~\eqref{eq:reflection} yields a full-sector trial
function in the magnetic form domain, which we denote by $u_\epsilon$.

Set
\[
  E_k[\chi]
  =
  \int _ {D_k} \varphi _ 0 (t) ^ 2
  \bigl[\chi _ s(s,t) ^ 2 + \chi _ t(s,t) ^ 2 \bigr] \dd s\dd t.
\]
We write
\[
  q^L_{D_k}[v]
  =
  \int_{D_k}
  \left(
    \abs{(-\ii\partial_s-t)v}^2
    +
    \abs{\partial_t v}^2
  \right)\dd s\,\dd t
\]
for the quadratic form in the Landau gauge $\A _ L=(-t,0)$.

\begin{proposition}\label{prop:defect}
For the lower-half trial function~\eqref{eq:trial-v},
\begin{equation}\label{eq:defect-formula}
  \lim_{\epsilon\to 0 ^ +}
  \bigl(q ^ L _ {D _ k}[v_\epsilon]
        -\Theta _ 0 \norm {v _ \epsilon } _ {L ^ 2(D _ k)} ^ 2\bigr)
  =
  -\frac {C _ 1}{k} + E _ k[\chi].
\end{equation}
\end{proposition}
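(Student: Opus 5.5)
The plan is a direct computation of the Landau-gauge energy density of $v_\epsilon$, followed by a term-by-term passage to the limit $\epsilon\to0^+$ using Lemma~\ref{lem:moments}. Write $\theta(s,t)=\xi_0 s+\chi(s,t)$, so that $v_\epsilon=\ee^{-\epsilon s}\varphi_0(t)\ee^{\ii\theta}$.

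\emph{Step 1: the energy density.} Differentiating gives
\[
  (-\ii\partial_s-t)v_\epsilon=\ee^{-\epsilon s}\ee^{\ii\theta}\varphi_0\,(\ii\epsilon+\xi_0+\chi_s-t),
  \qquad
  \partial_t v_\epsilon=\ee^{-\epsilon s}\ee^{\ii\theta}(\varphi_0'+\ii\varphi_0\chi_t).
\]
Taking moduli and subtracting $\Theta_0\abs{v_\epsilon}^2$, the integrand of $q^L_{D_k}[v_\epsilon]-\Theta_0\norm{v_\epsilon}^2$ becomes
\[
  \ee^{-2\epsilon s}\Bigl[h(t)+\epsilon^2\varphi_0^2+\varphi_0^2(\chi_s^2+\chi_t^2)-2(t-\xi_0)\varphi_0^2\chi_s\Bigr],
\]
with $h$ as in Lemma~\ref{lem:moments}. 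I will treat the four terms separately. For each, integrate first in $s\in(t/k,\infty)$ and then in $t>0$.

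\emph{Step 2: the trivial terms.} Since $\int_{t/k}^\infty \ee^{-2\epsilon s}\dd s=\ee^{-2\epsilon t/k}/(2\epsilon)$, the $\epsilon^2$ term is at most $\epsilon/2\to0$.

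The phase term converges to $E_k[\chi]$ by monotone convergence. One still has to check $E_k[\chi]<\infty$. From \eqref{eq:explicit-phase}, $\chi_s$ and $\chi_t$ are $\ee^{-2z}$ times polynomials in $(z,t)$. Integrating in $z$ therefore leaves a polynomial in $t$, and this is integrable against the Gaussian decay of $\varphi_0^2$.

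The cross term is handled by dominated convergence in $s$. The function $\chi_s$ is integrable on $(t/k,\infty)$, and $\chi\to0$ as $s\to\infty$. Hence $\int_{t/k}^\infty\ee^{-2\epsilon s}\chi_s\dd s\to-\chi(t/k,t)=-G(t)/k$ by \eqref{eq:phase-boundary-condition}, with a polynomial-in-$t$ bound that is uniform in $\epsilon$. The cross term therefore tends to
\[
  \frac2k\int_0^\infty(t-\xi_0)\varphi_0^2G\,\dd t
  =\frac1k\bigl(M_3-\Theta_0M_1\bigr)=\frac{C_1}{2k},
\]
using $G=\frac12((t-\xi_0)^2-\Theta_0)$ and Lemma~\ref{lem:moments}.

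\emph{Step 3: the $h$ term (main obstacle).} This term is of the form $\frac1{2\epsilon}\int_0^\infty h(t)\ee^{-2\epsilon t/k}\dd t$, which is a priori of order $1/\epsilon$. Write $\ee^{-x}=1-x+r(x)$ with $0\le r(x)\le x^2/2$ for $x\ge0$. The zeroth-order contribution vanishes because $\int h=0$ by \eqref{eq:weighted-energy}. The first-order contribution is exactly $-\frac1k\int_0^\infty th\,\dd t=-\frac{3C_1}{2k}$. The remainder is bounded by
\[
  \frac1{2\epsilon}\cdot\frac{2\epsilon^2}{k^2}\int_0^\infty t^2\abs{h}\dd t=O(\epsilon),
\]
which is finite since $h$ has Gaussian decay.

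\emph{Conclusion.} Summing the limits gives $-\frac{3C_1}{2k}+\frac{C_1}{2k}+E_k[\chi]=-\frac{C_1}{k}+E_k[\chi]$, which is \eqref{eq:defect-formula}. The only delicate point is this $1/\epsilon$ cancellation in Step 3: it is the reason the exact identities $\int h=0$ and $\int th=3C_1/2$ are needed. Everything else in the argument is dominated or monotone convergence.
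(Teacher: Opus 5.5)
Your proposal is correct and follows the paper's proof essentially line by line: the same four-term expansion of the Landau-gauge energy density, the same term-by-term limits, and the same use of $\int h=0$, $\int th=3C_1/2$, and $M_3=C_1/2$. Your Taylor-remainder bound for the $h$ term and your finiteness check for $E_k[\chi]$ make explicit two limits that the paper states without detail.
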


\begin{proof}
Expanding the form gives
\begin{align*}
  q ^ L _ {D _ k}[v_{\epsilon}]
  - \Theta _ 0\norm {v _ {\epsilon}} ^ 2
  ={}& \epsilon ^ 2
      \int _ {D _ k} \ee ^ {-2\epsilon s}\varphi _ 0(t) ^ 2\dd s\dd t
  \\
  & + \int _ {D _ k} \ee ^ {-2\epsilon s}h(t) \dd s\dd t
  \\
  & + 2\int _ {D _ k} \ee ^ {-2\epsilon s}
       (\xi _ 0 - t)\varphi _ 0(t)^2 \chi _ s(s,t) \dd s\dd t
  \\
  & + \int _ {D _ k} \ee ^ {-2\epsilon s} \varphi _ 0 (t)^2
       [\chi _ s (s,t) ^ 2 + \chi _ t (s,t) ^ 2] \dd s\dd t.
\end{align*}
Note that there is no mixed term involving $\epsilon$; the amplitude
derivative and the phase-current term are respectively purely imaginary and real.

The first term tends to zero because
\[
  \epsilon ^ 2 \int _ {D _ k} \ee ^ {-2\epsilon s}\varphi _ 0(t) ^ 2\dd s\dd t
  =
  \frac {\epsilon}{2}
    \int _ 0 ^ {+\infty} \ee ^ {-2\epsilon t/k} \varphi _ 0 (t) ^ 2\dd t.
\]
For the second term, using~\eqref{eq:weighted-energy},
\[
  \int _ {D _ k} \ee ^ {-2\epsilon s}h(t)\dd s\dd t
  =\frac {1}{2\epsilon}
    \int _ 0 ^ {+\infty} \ee ^ {-2\epsilon t/k} h(t)\dd t
  \to
  -\frac{1}{k} \int _ 0 ^ {+\infty} t h(t)\dd t
  =
  -\frac{3C_1}{2k}.
\]
For the linear phase term, dominated convergence
and~\eqref{eq:phase-boundary-condition} yield
\begin{align*}
  \MoveEqLeft
  2\int_{D_k} \ee ^ {-2\epsilon s}
       (\xi _ 0 - t) \varphi _ 0 (t) ^ 2\chi _ s(s,t)\dd s\dd t
  \\
  &
  \to
  2 \int _ 0 ^ {+\infty} (\xi _ 0 - t) \varphi _ 0(t) ^ 2
     \biggl(\int _ {t/k} ^ {+\infty} \chi _ s (s,t)\dd s \biggr)\dd t
  \\
  &
  =
  - \frac {2}{k} \int _ 0 ^ {+\infty} (\xi _ 0 - t) G(t) \varphi _ 0(t) ^ 2\dd t.
\end{align*}
Since $G(t) = [(t - \xi _ 0) ^ 2 - \Theta _ 0]/2$, Lemma~\ref{lem:moments} gives
\[
  \int _ 0 ^ {+\infty} (\xi _ 0 - t) G(t) \varphi _ 0(t) ^ 2\dd t
  =
  -\frac12(M_3-\Theta _ 0 M_1)
  =
  -\frac{C_1}{4}.
\]
Thus the linear phase term converges to $C_1/(2k)$. Finally, the last term
converges to $E_k[\chi]$. Combining the four limits
proves~\eqref{eq:defect-formula}.
\end{proof}

It remains to verify that the phase $\chi$ defined in~\eqref{eq:explicit-phase}
satisfies $E_k[\chi]<C_1/k$.

\subsection{Verifying the remaining inequality}

Here $\chi_t$ denotes differentiation at fixed $s$, whereas $\Psi_t$
denotes differentiation at fixed $z$. By the chain rule,
\[
  \chi _ s
  =
  \frac {1}{k} \Psi _ z,
  \quad
  \chi _ t
  = \frac {1}{k}\bigl(\Psi _ t - \frac{1}{k} \Psi _ z\bigr).
\]
Therefore,
\[
  E_k[\chi] = \frac1{k^2}J(c), \quad c=\frac {1}{k} \in (0,1],
\]
where
\begin{equation}\label{eq:J-definition}
  J(c)
  =
  \int _ 0 ^ {+\infty}\int _ 0 ^ {+\infty} \varphi _ 0 (t) ^ 2
  [\Psi _ z ^ 2 + (\Psi _ t - c\Psi _ z) ^ 2] \dd z\dd t.
\end{equation}
From
\[
  \Psi_z = \ee ^ {-2z} [(t - \xi _ 0) - 2G - 2 z (t - \xi _ 0)],
  \quad
  \Psi_t = \ee ^ {-2z} [(t - \xi _ 0) + z],
\]
a direct expansion followed by Lemma~\ref{lem:moments} gives
\[
  J(c)
  =
  \frac {1}{32}
    [
      8(1 + c ^ 2)M _ 4
      + (-4c ^ 2 + 8c - 4)C _ 1
      + (2c ^ 2 - 6 c + 6)\Theta _ 0 + 1
    ].
\]
In particular,
\[
  J(0) = \frac {-4C _ 1 + 6\Theta _ 0 + 8M_4 + 1}{32},\quad
  J(1) = \frac {2\Theta _ 0 + 16M _ 4 + 1}{32}.
\]
Substituting the expression for $M_4$ from~\eqref{eq:moment-1234} and using the
bounds in~\eqref{eq:coarse-bounds}, we obtain
\[
  C _ 1 - J(0)
  =
  \frac{(36 + 9\xi _ 0)C _ 1 - 3\Theta _ 0 ^ 2 - 6\Theta _ 0 - 4}{32}
  >
  \frac {803}{12800}
  > 0.
\]
Similarly,
\[
  C _ 1 - J(1)
  = \frac {(32 + 18\xi _ 0)C _ 1 - 6\Theta _ 0 ^ 2 - 2\Theta _ 0 - 7}{32}
  > \frac {203}{6400}
  > 0.
\]
It follows from~\eqref{eq:J-definition} that $J$ is a convex quadratic polynomial
in $c$. Hence, for every $0\leq c\leq1$,
\[
  J(c)
  \leq
  (1-c)J(0)+cJ(1)
  < C_1.
\]
Consequently, for every $k \geq 1$,
\begin{equation}\label{eq:E-final}
  E_k[\chi]
  =
  \frac {1}{k ^ 2} J(1/k)
  <
  \frac{C _ 1}{k ^ 2}
  \leq
  \frac{C _ 1}{k}.
\end{equation}
Combining Proposition~\ref{prop:defect} with \eqref{eq:E-final}, we find that the
limiting energy defect is strictly negative. Hence, for all sufficiently small
$\epsilon>0$, the reflected full-sector trial function satisfies
\[
  q_{\alpha}[u_\epsilon]
  <
  \Theta _ 0 \norm {u_\epsilon} ^ 2.
\]
The Rayleigh principle therefore gives
\[
  \lambda(\alpha) < \Theta _ 0,
  \quad
  \frac{\pi}{2}\leq\alpha<\pi.
\]
Together with the previously known result for $0<\alpha\leq\pi/2$, this completes
the proof of Theorem~\ref{thm:main}.

\begin{remark}\label{rem:quantitative}
Since $\norm {v _ \epsilon} ^ 2 \sim 1/\epsilon$, the finite limit in
Proposition~\ref{prop:defect} controls the Rayleigh quotient only up to an
$O(\epsilon)$ error and therefore does not yield a quantitative estimate on the
gap $\lambda(\alpha) - \Theta _ 0$. Obtaining such a bound would likely require
adjusting the exponential prefactor in $v _ {\epsilon}$ by a function better
adapted to the energy optimization.
\end{remark}

\section {Applications}

We discuss here direct applications of Theorem~\ref{thm:main}.

\subsection{Geometric eigenvalue bound.}

Colbois, L\'ena, Provenzano and Savo established geometric bounds for the
eigenvalues of the magnetic Laplacian in general domains~\cite{MR4659291}. One of
their results concerns polygonal rep-tiles. Recall that a polygonal domain
$\Omega$ is called a rep-tile if it can be tiled by $m\ge 2$ scaled copies
$\omega_i$ of $\Omega$, each isometric to $m^{-1/2}\Omega$
(see~\cite[Definition~3.5]{MR4659291}). If $\Omega$ is a polygonal rep-tile with
interior angles $\alpha_1,\dots,\alpha_N$, then by~\cite[Theorem~2.2]{MR4659291},
\begin{equation}\label{eq:gb}
\lambda_1(\Omega,b \A_0) \leq b\Theta_0,
\end{equation}
where $\lambda_1(\Omega,b\A_0)$ denotes the lowest eigenvalue of the
magnetic Neumann Laplacian on $\Omega$ with constant magnetic field strength
$b>0$. This bound follows from the stronger result \cite[Theorem~3.7]{MR4659291}:
\[
\lambda_1(\Omega,b \mathbf A_0) \le b\min_{1\leq i\leq N}\lambda(\alpha_i).
\]

In the case where the angles $\alpha_1,\dots,\alpha_N\in(0,\pi)$, Theorem~\ref{thm:main} sharpens the bound in \eqref{eq:gb}:
\begin{equation}\label{eq:gb*}
\lambda_1(\Omega,b \A_0) < b\Theta_0.
\end{equation}

\subsection{Onset of superconductivity.}

Theorem~\ref{thm:main} has immediate consequences for the effective models in the
Ginzburg--Landau theory of type-II superconductors. These effective models are
defined in the sector $\Omega_\alpha$ and their validity requires the inequality
$\lambda(\alpha)<\Theta _ 0$. With Theorem~\ref{thm:main} in hand, the
corresponding results hold for all opening angles $\alpha\in(0,\pi)$. Thus,
\begin{enumerate}
  \item Theorem 1.1 and its corollary in~\cite{MR4890869} hold for all opening
  angles in $(0,\pi)$.
  \item For a bounded domain with corners, the third critical field is strictly
  larger than that for smooth domains, as long as the opening angles of the
  corners lie in $(0,\pi)$. Moreover, Theorem~1.2 in~\cite{MR3832147}, describing
  the density of superconductivity near corners, holds without restriction on the
  opening angles.
\end{enumerate}

\subsection{Possible extensions and related problems.}

The proof of Theorem~\ref{thm:main} can likely be adapted to other settings where
corner-like effects appear in magnetic Schr\"odinger operators. A notable example
is the magnetic-step problem, where a discontinuity in the applied magnetic field
produces effects analogous to those caused by corners \cite{MR4201025,
MR4778604}. Another natural direction is the study of three-dimensional wedges,
where analogous spectral questions arise for the magnetic Neumann Laplacian in
domains with edges \cite{MR3416835}. The variational technique developed here may
provide a suitable framework for establishing the analogue of the strict
inequality $\lambda(\alpha) < \Theta _ 0$ in such problems.

\bibliographystyle{amsplain}
\bibliography{KS-sector}

\end{document}